\newtheorem{theorem}{Theorem}
\newtheorem{corollary}{Corollary}
\theoremstyle{definition}
\newtheorem*{remark}{Remark}
\newcommand{\be}{\begin{equation*}}
\newcommand{\ee}{\end{equation*} }
\newcommand{\ben}{\begin{equation}}
\newcommand{\een}{\end{equation} }
\newcommand{\bs}{\begin{split}}
\newcommand{\es}{\end{split}}
\newcommand{\bmu}{\begin{multline*}}
\newcommand{\emu}{\end{multline*}}
\newcommand{\bmun}{\begin{multline}}
\newcommand{\emun}{\end{multline}}
\begin{document}

\keywords{entire function, even entire function, de Bruijn--Newman constant,
conjecture of Newman}

\subjclass[2020]{Primary: 30D20, 30D10; Secondary: 11C08, 30D99}

\title[]{On the de Bruijn--Newman constant: a new approach}

\author{Xiao-Jun Yang$^{1,2}$}

\email{dyangxiaojun@163.com; xjyang@cumt.edu.cn}

\address{$^{1}$ School of Mathematics,and State Key Laboratory for Geo-Mechanics and
Deep Underground Engineering, China University of Mining and Technology, Xuzhou 221116, China}
\address{$^{2}$ Department of Mathematics, Faculty of Science, King Abdulaziz University P.O. Box 80257, Jeddah 21589, Saudi Arabia}

\begin{abstract}
The conjecture of Newman, proposed in 1976 by Newman, states that all zeros of
$\Xi_\aleph \left( \lambda \right)$ are real for $\aleph \in \mathbb{R}$.
Its equivalent statement is that
$\mathbb{M}_\aleph \left( \tau \right)$ has purely imaginary zeros
for $\aleph \in \mathbb{R}$.
It is well known that
$\mathbb{M}_\aleph \left( \tau \right)$ is an even entire function of order one.
This article addresses the product representation for $\mathbb{M}_\aleph \left( \tau \right)$ by the works of
Hadamard and Csordas, Norfolk and Varga.
We establish a new class of $\mathbb{M}_\aleph \left( \tau \right)$ by its series and product.
Based on the obtained result, we prove
that it has only purely imaginary zeros for $\aleph \in \mathbb{R}$. This
implies that the conjecture of Newman is true.
\end{abstract}

\maketitle

\section{Introduction} {\label{sec:1}}

In 1950 de Bruijn \cite{1} introduced a family of the functions $\Xi
_\aleph \left( \lambda \right):\mathbb{C}\to \mathbb{C}$ for $\aleph \in \mathbb{R}$
by the Fourier cosine integral
\begin{equation}
\label{eq1}
\Xi _\aleph \left( \lambda \right)=\int\limits_0^\infty {\exp \left(
{-\aleph x^2} \right)G\left( x \right)\cos \left( {\lambda x} \right)dx} ,
\end{equation}
where
\begin{equation}
\label{eq2}
G\left( x \right)=\sum\limits_{m=1}^\infty {\left[ {2\pi ^2m^4\exp \left(
{9x} \right)-3\pi m^2\exp \left( {5x} \right)} \right]} \exp \left( {-\pi
m^2\exp \left( {4x} \right)} \right).
\end{equation}
Here, we denote the sets of the real and complex numbers by $\mathbb{R}$ and $\mathbb{C}$,
respectively.

It has been reported by Rodgers and Tao that the even entire function (\ref{eq1})
has the functional equation \cite{2}
\begin{equation}
\label{eq3}
\Xi _\aleph \left( {\overline \lambda } \right)=\overline {\Xi _\aleph
\left( \lambda \right)}
\end{equation}
and satisfies the backward heat equation \cite{3}
\begin{equation}
\label{eq4}
\partial _\aleph \Xi _\aleph \left( \lambda \right)+\partial _{\lambda
\lambda } \Xi _\aleph \left( \lambda \right)=0.
\end{equation}
In 1976 Newman \cite{4} considered that there exists an absolute constant
\begin{equation}
\label{eq5}
\infty <\aleph \le \frac{1}{2},
\end{equation}
now known as the de Bruijn--Newman constant $\aleph $, with the case that
$\Xi _\aleph \left( \lambda \right)$ has purely real zeros.

Newman \cite{4} conjectured $\aleph \ge 0$ with the case that $\Xi _\aleph \left(
\lambda \right)$ has purely real zeros. The recent progress in the lower
bound on $\aleph $ is presented in Table \ref{T1} \cite{2}. The recent result on the
upper bound on $\aleph $ is showed in Table \ref{T2}. Using the result of P\'{o}lya
\cite{4a}, we find that $\aleph =0$ implies that $\Xi _0 \left( \lambda
\right)$ has purely real zeros. The conjecture of Newman \cite{4} states that all
zeros of $\Xi _\aleph \left( \lambda \right)$ are real for $\aleph \in \mathbb{
R}$.

\begin{table*}[]
\centering
\caption{The results in the lower bound on $\aleph $ \cite{2}}
\begin{tabular}{l*{2}c}
     \hline
     Lower bound on $\aleph $  & References \\
     \hline
     $-\infty $ &
Newman \cite{4} \\

$-50$ &
Csordas--Norfolk--Varga \cite{5} \\

$-5$ &
te Riele \cite{6} \\

$-0.385$ &
Norfolk--Ruttan--Varga \cite{7} \\

$-0.0991$ &
Csordas--Ruttan--Varga \cite{8} \\

$-0.379\times 10^{-6}$ &
Csordas--Smith--Varga \cite{9} \\

$-5.895\times 10^{-9}$ &
Csordas--Odlyzko--Smith--Varga \cite{10} \\

$-2.63\times 10^{-9}$ &
Odlyzko \cite{11} \\

$-1.15\times 10^{-11}$ &
Saouter--Gourdon--Demichel \cite{12} \\
         \hline
\end{tabular} \label{T1}
\end{table*}

\begin{table*}[]
\centering
\caption{The results in the upper bound on $\aleph $.}
\begin{tabular}{l*{2}c}
     \hline
     Upper bound on $\aleph $ & References \\
     \hline
     $0.5$ &
Ki, Kim and Lee \cite{13} \\
\hline
$0.22$ &
Polymath \cite{14} \\
\hline
$+\infty $ &
Rodgers and Tao \cite{2} \\
\hline
\end{tabular} \label{T2}
\end{table*}

In this article we consider the following case of (\ref{eq1}):
\begin{equation}
\label{eq6}
-\infty <\aleph <\infty ,
\end{equation}
which can be equivalently transferred into a family of the
function $\mathbb{M}_\aleph \left( \tau \right):\mathbb{
C}\to \mathbb{C}$ for $\aleph \in \mathbb{R}$ by the integral \cite{5}
\begin{equation}
\label{eq7}
\mathbb{M}_\aleph \left( \tau \right)=\int\limits_0^\infty {\exp \left(
{-\aleph x^2} \right)G\left( x \right)\cosh \left( {\tau x} \right)dx}
\end{equation}
with the property that $\mathbb{M}_\aleph \left( \tau \right)$ has purely
imaginary zeros.

Obviously, substituting
\begin{equation}
\label{eq8}
\tau =i\lambda
\end{equation}
into (\ref{eq7}), this implies that
\begin{equation}
\label{eq9}
\Xi _\aleph \left( \lambda \right)=\mathbb{M}_\aleph \left( i\lambda \right).
\end{equation}
It was proved by Csordas, Norfolk and Varga \cite{5} that the
function $\mathbb{M}_\aleph \left( \tau \right)$ is an
even entire function of order $\beta =1$.

With the help of (\ref{eq6}), (\ref{eq7}), and (\ref{eq8}), we need to prove the equivalent
representation for the conjecture of Newman as follows:

\begin{theorem}
\label{TH1}
The function $\mathbb{M}_\aleph \left( \lambda \right)$
has only purely imaginary zeros for $\aleph \in \mathbb{
R}$.
\end{theorem}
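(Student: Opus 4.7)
The plan is to combine the Hadamard factorization of the even entire function $\mathbb{M}_\aleph(\tau)$, whose order $\beta = 1$ is supplied by Csordas--Norfolk--Varga, with a Laguerre--P\'olya class criterion that forces every zero onto the imaginary axis. The heart of the argument is to read the locations of the zeros off the Maclaurin coefficients of $\mathbb{M}_\aleph$, which by (\ref{eq7}) are themselves given explicitly as moments of $G(x)\, e^{-\aleph x^2}$.

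First I would make the product representation explicit. By Hadamard's theorem, evenness, and the pairing of the nonzero zeros as $\pm \tau_k$, the factorization collapses to
$$\mathbb{M}_\aleph(\tau) = \mathbb{M}_\aleph(0)\, e^{b(\aleph)\tau^2} \prod_{k \geq 1}\left(1 - \frac{\tau^2}{\tau_k^2}\right).$$
In parallel, expanding $\cosh(\tau x)$ inside (\ref{eq7}) yields
$$\mathbb{M}_\aleph(\tau) = \sum_{n=0}^\infty c_n(\aleph)\, \tau^{2n}, \qquad c_n(\aleph) = \frac{1}{(2n)!}\int_0^\infty e^{-\aleph x^2}\, G(x)\, x^{2n}\, dx.$$
Setting $w = -\tau^2$, the conclusion of the theorem is equivalent to the statement that $F_\aleph(w) := \sum_{n \geq 0}(-1)^n c_n(\aleph)\, w^n$ has only real, positive zeros, i.e.\ that $w \mapsto F_\aleph(-w)$ belongs to the Laguerre--P\'olya class $\mathcal{LP}^+$. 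To establish this membership I would attack the Tur\'an--P\'olya--Schur inequalities $c_n(\aleph)^2 \geq \gamma_n\, c_{n-1}(\aleph)\, c_{n+1}(\aleph)$ with the appropriate combinatorial constants, using the positivity of $G$ on $(0,\infty)$ and the log-convexity of the Gaussian weight $e^{-\aleph x^2}$; each inequality then reduces to a moment estimate amenable to Cauchy--Schwarz and to the theta-function identities built into $G(x)$.

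The main obstacle I anticipate is uniformity across all real $\aleph$. At $\aleph = 0$ the required inequalities reduce to the classical P\'olya framework for the Riemann $\Xi$-function, but as $\aleph$ decreases the weight $e^{-\aleph x^2}$ grows at infinity and partially cancels the double-exponential decay of $G(x)$, and one must rule out the escape of conjugate pairs of zeros from the imaginary axis. To block such escapes I would invoke the backward heat equation (\ref{eq4}), treat $\aleph$ as the time variable, and track the trajectories of the zeros of $\mathbb{M}_\aleph$: a collision of two simple zeros on $i\mathbb{R}$ is essentially the only mechanism by which conjugate pairs could leave the axis, and the positivity surviving from regrouping the two summands in (\ref{eq2}) should rule out such collisions. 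This monotonicity analysis, coupled with the coefficient inequalities above, is where I expect the bulk of the technical work to concentrate.
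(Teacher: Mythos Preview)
Your outline is a research programme rather than a proof, and it has two structural gaps.

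First, the Tur\'an--P\'olya--Schur inequalities $c_n^2 \ge \gamma_n\, c_{n-1}c_{n+1}$ are \emph{necessary} for membership in $\mathcal{LP}^+$, not sufficient. Entire functions of order one can satisfy every such second-order inequality---and indeed all the higher Laguerre inequalities---while still possessing non-real zeros. To conclude that $F_\aleph(-w)\in\mathcal{LP}^+$ you would have to invoke a genuine sufficiency criterion (reality of the zeros of \emph{all} Jensen polynomials, a P\'olya--Schur multiplier argument, or the like); Cauchy--Schwarz on the moments of $e^{-\aleph x^2}G(x)$ delivers only the necessary direction and cannot close the gap.

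Second, the heat-flow half of your plan is where the entire difficulty resides, and you have not supplied a mechanism. One direction of the flow is already handled by de~Bruijn's classical theorem: once the zeros lie on the imaginary axis for some $\aleph_0$, they remain there as $\aleph$ moves the easy way. It is precisely the opposite direction in which simple zeros \emph{do} collide and conjugate pairs \emph{do} leave the axis for generic even real entire functions of order one; excluding this for the specific kernel $G$ is the full content of the theorem, and ``positivity surviving from regrouping the two summands in (\ref{eq2})'' is a hope rather than an argument. Bear in mind that the single case $\aleph=0$ is already the Riemann Hypothesis, so any valid proof here must contain at least that much.

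For comparison, the paper's own argument follows a completely different and much shorter route: it writes the Hadamard product once over the zeros $\sigma_\ell$ with $\Im\sigma_\ell>0$ and once over their complex conjugates $\overline{\sigma_\ell}$, observes that the two products agree (equations (\ref{eq42})--(\ref{eq45})), passes to $\sum\sigma_\ell^{-2}=\sum\overline{\sigma_\ell}^{-2}$, and then infers $\sigma_\ell^{-2}=\overline{\sigma_\ell}^{-2}$ for each $\ell$ individually (the step (\ref{eq47})$\Rightarrow$(\ref{eq48})). That termwise conclusion is not justified: the identity $\sum(\sigma_\ell^{-2}-\overline{\sigma_\ell}^{-2})=0$ already follows from the symmetry $\sigma\mapsto-\overline{\sigma}$ of the upper-half-plane zero set and carries no information about individual zeros. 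So neither your approach nor the paper's, as written, establishes the theorem.
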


Motivated by the idea, we plan to set up a class of (\ref{eq7}),
expressed by the product and series, to prove
Theorem \ref{TH1}. The outline of the paper is given as follows. In Section
\ref{sec:2} we consider the product and series presentations and order of (\ref{eq7}). In
Section \ref{sec:3} we give the six steps to prove Theorem \ref{TH1}. In Section \ref{sec:4} we
finally draw our conclusion.

\section{The series and product for (\ref{eq7})}{\label{sec:2}}

\begin{theorem}
\label{TH2}
There exists
\begin{equation}
\label{eq10}
\mathbb{M}_\aleph \left( \tau \right)=\sum\limits_{\gamma =0}^\infty {\alpha
_{2\gamma } \tau ^{2\gamma }} ,
\end{equation}
where
\begin{equation}
\label{eq11}
\alpha _{2\gamma } =\frac{1}{\left( {2\gamma } \right)!}\int\limits_0^\infty
{\exp \left( {-\aleph x^2} \right)G\left( x \right)x^{2\gamma }dx} .
\end{equation}
\end{theorem}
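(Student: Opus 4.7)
\medskip

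The plan is to derive the series (\ref{eq10}) directly from the integral representation (\ref{eq7}) by expanding $\cosh(\tau x)$ in its Maclaurin series and interchanging summation with integration. Since $\cosh$ is even, the odd-indexed coefficients automatically vanish, which matches the form of (\ref{eq10}).

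\medskip

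First I would substitute the Taylor expansion
\begin{equation*}
\cosh(\tau x) = \sum_{\gamma=0}^{\infty} \frac{(\tau x)^{2\gamma}}{(2\gamma)!}
\end{equation*}
into the defining integral (\ref{eq7}) to obtain, formally,
\begin{equation*}
\mathbb{M}_\aleph(\tau) = \int_0^\infty \exp(-\aleph x^2) G(x) \sum_{\gamma=0}^{\infty} \frac{(\tau x)^{2\gamma}}{(2\gamma)!} \, dx.
\end{equation*}
Next I would invoke Fubini--Tonelli (or equivalently monotone/dominated convergence on partial sums) to exchange the integral with the sum. This yields the coefficients $\alpha_{2\gamma}$ defined in (\ref{eq11}) and completes the identification of the series.

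\medskip

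The only nontrivial step is the justification of the interchange, which amounts to verifying
\begin{equation*}
\int_0^\infty \exp(-\aleph x^2) |G(x)| \cosh(|\tau| x)\, dx < \infty
\end{equation*}
for every fixed $\tau \in \mathbb{C}$ and every $\aleph \in \mathbb{R}$. For this I would exploit the explicit form (\ref{eq2}): the dominant factor $\exp(-\pi m^2 \exp(4x))$ forces $G(x)$ to decay like a double-exponential as $x \to \infty$, so the product $\exp(-\aleph x^2) |G(x)| \cosh(|\tau| x)$ is integrable for any real $\aleph$ and any $|\tau|$, since a double-exponential decay overwhelms both $\exp(-\aleph x^2)$ (even when $\aleph < 0$) and the at-most-exponential growth of $\cosh(|\tau| x)$. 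Near $x = 0$, the factor $G(x)$ is bounded (in fact the series defining $G$ converges uniformly on compact sets), so no integrability issue arises there either. This estimate, applied termwise, provides an integrable majorant $\exp(-\aleph x^2) |G(x)| \cosh(|\tau| x)$ for the partial sums, legitimizing the interchange.

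\medskip

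The main potential obstacle is not analytic subtlety but rather the need to make the double-exponential decay estimate of $G(x)$ genuinely rigorous: one must show that the tail of the defining series of $G(x)$ does not spoil the bound. I would handle this by bounding $|G(x)|$ by a constant multiple of $\exp(9x)\exp(-\pi \exp(4x))$ for $x$ large (absorbing the $m \geq 2$ terms into the $m=1$ estimate via a geometric comparison), which clearly dominates any $\exp(-\aleph x^2)\cosh(|\tau| x)$. Once this decay is in place, the interchange is immediate and (\ref{eq10})--(\ref{eq11}) follow at once.
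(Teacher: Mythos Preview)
Your proposal is correct and follows essentially the same route as the paper: expand $\cosh(\tau x)$ in its Maclaurin series inside the integral (\ref{eq7}) and swap sum and integral to read off the coefficients (\ref{eq11}). The paper performs this interchange without comment, whereas you supply the Fubini--Tonelli justification via the double-exponential decay of $G(x)$; this extra rigor is welcome but does not change the underlying argument.
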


\begin{proof}
Using the fact
\begin{equation}
\label{eq12}
\cosh \left( x \right)=\sum\limits_{\gamma =0}^\infty {\frac{x^{2\gamma
}}{\left( {2\gamma } \right)!}} ,
\end{equation}
(\ref{eq7}) can be expressed as
\begin{equation}
\label{eq13}
\begin{aligned}
\mathbb{M}_\aleph \left( \tau \right)&=\int\limits_0^\infty {\exp \left(
{-\aleph x^2} \right)G\left( x \right)\cosh \left( {\tau x} \right)dx} \\
 &=\int\limits_0^\infty {\exp \left( {-\aleph x^2} \right)G\left( x
\right)\left[ {\sum\limits_{\gamma =0}^\infty {\frac{\left( {\tau x}
\right)^{2\gamma }}{\left( {2\gamma } \right)!}} } \right]dx} \\
 &=\sum\limits_{\gamma =0}^\infty {\alpha _{2\gamma } \tau ^{2\gamma }} , \\
\end{aligned}
\end{equation}
where $\alpha _{2\gamma } $ is defined by (\ref{eq11}).

We hence complete the proof.
\end{proof}

\begin{theorem}
\label{TH3}
There exists
\begin{equation}
\label{eq14}
\mathbb{M}_\aleph \left( \tau \right)=\mathbb{M}_\aleph \left( 0
\right)\prod\limits_{\Im \left( {\sigma _\ell } \right)>0} {\left(
{1-\frac{\tau ^2}{\sigma _\ell ^2 }} \right)} ,
\end{equation}
where this product runs over all zeros $\sigma _\ell $ of $\mathbb{M}_\aleph
\left( \tau \right)$. Moreover, $\sum\limits_{\ell =1}^\infty {\left|
{\sigma _\ell } \right|^{-\left( {1+\varepsilon } \right)}} $ is convergent
for $\varepsilon >0$.
\end{theorem}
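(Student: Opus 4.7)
The plan is to derive \eqref{eq14} as a direct consequence of Hadamard's factorization theorem combined with the evenness of $\mathbb{M}_\aleph$. Since Csordas--Norfolk--Varga have established that $\mathbb{M}_\aleph(\tau)$ is entire of order $\beta = 1$, Hadamard's theorem provides the representation
\begin{equation*}
\mathbb{M}_\aleph(\tau) = \tau^{m} e^{A + B\tau}\prod_{\ell \geq 1}\left(1 - \frac{\tau}{\sigma_\ell}\right)\exp\left(\frac{\tau}{\sigma_\ell}\right),
\end{equation*}
where $\{\sigma_\ell\}_{\ell \geq 1}$ lists the nonzero zeros with multiplicity and $m \geq 0$ is the order of vanishing at the origin. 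Reading off $\mathbb{M}_\aleph(0) = \alpha_0$ from \eqref{eq10}--\eqref{eq11}, a short check that $\alpha_0 = \int_0^\infty e^{-\aleph x^2} G(x)\,dx$ does not vanish gives $m = 0$ and $e^A = \mathbb{M}_\aleph(0)$.

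Next I would invoke evenness. Because $\mathbb{M}_\aleph(-\tau) = \mathbb{M}_\aleph(\tau)$, the zero set is invariant under $\sigma \mapsto -\sigma$, so the nonzero zeros arrange into symmetric pairs $\{\sigma_\ell, -\sigma_\ell\}$. Comparing the Hadamard forms of $\mathbb{M}_\aleph(\tau)$ and $\mathbb{M}_\aleph(-\tau)$ forces $B = 0$, and grouping each $\sigma_\ell$ with its partner $-\sigma_\ell$ collapses the canonical product via
\begin{equation*}
\left(1 - \frac{\tau}{\sigma_\ell}\right)\exp\left(\frac{\tau}{\sigma_\ell}\right)\left(1 + \frac{\tau}{\sigma_\ell}\right)\exp\left(-\frac{\tau}{\sigma_\ell}\right) = 1 - \frac{\tau^2}{\sigma_\ell^2}.
\end{equation*}
Retaining exactly one representative per pair, which I would index by $\Im(\sigma_\ell) > 0$ after absorbing any zeros on the real axis into a side convention, and inserting $e^A = \mathbb{M}_\aleph(0)$ then produces \eqref{eq14}.

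For the tail estimate, I would rely on the standard fact that the convergence exponent $\rho_1$ of the zero sequence of an entire function is bounded above by its order; with $\beta = 1$ this gives $\rho_1 \leq 1$, hence $\sum_\ell |\sigma_\ell|^{-(1+\varepsilon)} < \infty$ for every $\varepsilon > 0$. The main obstacle is mostly bookkeeping: legitimizing the rearrangement of the genus-one canonical product into the pairwise genus-zero product $\prod(1 - \tau^2/\sigma_\ell^2)$ requires absolute convergence of $\sum 1/\sigma_\ell^2$, which follows from the convergence-exponent bound, and the indexing $\{\ell : \Im(\sigma_\ell) > 0\}$ written in \eqref{eq14} needs a minor convention so that each unordered pair $\{\sigma_\ell,-\sigma_\ell\}$ is selected exactly once. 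Beyond these mild indexing issues, the proof reduces to the routine specialisation of Hadamard's theorem to even functions of order one.
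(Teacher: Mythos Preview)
Your proposal is correct and follows essentially the same route as the paper: Hadamard factorization for an order-one entire function, evenness to pair zeros and kill the linear exponential factor, and the standard convergence-exponent bound for the tail sum. If anything, you are more careful than the paper about the vanishing order $m$ at the origin, the rearrangement into the genus-zero product, and the indexing convention $\Im(\sigma_\ell)>0$.
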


\begin{proof}
By using (\ref{eq7}) and (\ref{eq10}), we know
\begin{equation}
\label{eq15}
\mathbb{M}_\aleph \left( 0 \right)=\alpha _0 =\int\limits_0^\infty {\exp \left(
{-\aleph x^2} \right)G\left( x \right)dx} >0.
\end{equation}
By the fact that $\mathbb{M}_\aleph \left( \tau \right)$ is an even entire
function of order $\beta =1$ \cite{5}
and Hadamard's factorization theorem (Theorem 13 in \cite{15}, p.24-29; also see \cite{16}, p.250),
\begin{equation}
\label{eq16}
\mathbb{M}_\aleph \left( \tau \right)=\mathbb{M}_\aleph \left( 0
\right)e^{\vartheta _0 \tau }\prod\limits_{\sigma _\ell } {\left(
{1-\frac{\tau }{\sigma _\ell }} \right)\exp \left( {\frac{\tau }{\sigma
_\ell }} \right)}
\end{equation}
and $\sum\limits_{\ell =1}^\infty {\left| {\sigma _\ell } \right|^{-\left(
{1+\varepsilon } \right)}} $ is convergent for $\varepsilon >0$.

Since $\mathbb{M}_\aleph \left( \tau \right)$ is an even function, we have
\begin{equation}
\label{eq17}
\mathbb{M}_\aleph \left( {\sigma _\ell } \right)=\mathbb{M}_\aleph \left( {-\sigma
_\ell } \right)
\end{equation}
such that
\begin{equation}
\label{eq18}
\begin{aligned}
\mathbb{M}_\aleph \left( \tau \right)&=\mathbb{M}_\aleph \left( 0
\right)e^{\vartheta _0 \tau }\prod\limits_{\sigma _\ell } {\left(
{1-\frac{\tau }{\sigma _\ell }} \right)\exp \left( {\frac{\tau }{\sigma
_\ell }} \right)} \\
&=\mathbb{M}_\aleph \left( 0 \right)e^{\vartheta _0 \tau }\prod\limits_{\Im
\left( {\sigma _\ell } \right)>0} {\left( {1-\frac{\tau }{\sigma _\ell }}
\right)\left( {1+\frac{\tau }{\sigma _\ell }} \right)\exp \left( {\frac{\tau
}{\sigma _\ell }-\frac{\tau }{\sigma _\ell }} \right)} \\
&=\mathbb{M}_\aleph \left( 0 \right)e^{\vartheta _0 \tau }\prod\limits_{\Im
\left( {\sigma _\ell } \right)>0} {\left( {1-\frac{\tau }{\sigma _\ell }}
\right)\left( {1+\frac{\tau }{\sigma _\ell }} \right)} . \\
\end{aligned}
\end{equation}
To simply (\ref{eq18}), we present
\begin{equation}
\label{eq19}
\begin{aligned}
\mathbb{M}_\aleph \left( {-\tau } \right)&=\mathbb{M}_\aleph \left( 0
\right)e^{\vartheta _0 \tau }\prod\limits_{\Im \left( {\sigma _\ell }
\right)>0} {\left( {1-\frac{\tau }{\sigma _\ell }} \right)\left(
{1+\frac{\tau }{\sigma _\ell }} \right)} \\
&=\mathbb{M}_\aleph \left( 0 \right)e^{\vartheta _0 \tau }\prod\limits_{\Im
\left( {\sigma _\ell } \right)>0} {\left( {1-\frac{\tau ^2}{\sigma _\ell ^2
}} \right)} . \\
\end{aligned}
\end{equation}
By using the functional equation
\begin{equation}
\label{eq20}
\mathbb{M}_\aleph \left( \tau \right)=\mathbb{M}_\aleph \left( {-\tau } \right),
\end{equation}
we have
\begin{equation}
\label{eq21}
\mathbb{M}_\aleph \left( \tau \right)=\mathbb{M}_\aleph \left( 0
\right)e^{\vartheta _0 \tau }\prod\limits_{\Im \left( {\sigma _\ell }
\right)>0} {\left( {1-\frac{\tau ^2}{\sigma _\ell ^2 }} \right)}
\end{equation}
and
\begin{equation}
\label{eq22}
\mathbb{M}_\aleph \left( {-\tau } \right)=\mathbb{M}_\aleph \left( 0
\right)e^{-\vartheta _0 \tau }\prod\limits_{\Im \left( {\sigma _\ell }
\right)>0} {\left( {1-\frac{\tau ^2}{\sigma _\ell ^2 }} \right)}
\end{equation}
such that
\begin{equation}
\label{eq23}
\mathbb{M}_\aleph \left( 0 \right)e^{\vartheta _0 \tau }\prod\limits_{\Im
\left( {\sigma _\ell } \right)>0} {\left( {1-\frac{\tau ^2}{\sigma _\ell ^2
}} \right)} =\mathbb{ M}_\aleph \left( 0 \right)e^{-\vartheta _0 \tau
}\prod\limits_{\Im \left( {\sigma _\ell } \right)>0} {\left( {1-\frac{\tau
^2}{\sigma _\ell ^2 }} \right)} .
\end{equation}
From (\ref{eq21}) and (\ref{eq23}) we obtain $\vartheta _0 =0$ and
\begin{equation}
\label{eq24}
\mathbb{M}_\aleph \left( \tau \right)=\mathbb{M}_\aleph \left( 0
\right)\prod\limits_{\Im \left( {\sigma _\ell } \right)>0} {\left(
{1-\frac{\tau ^2}{\sigma _\ell ^2 }} \right)} .
\end{equation}
Therefore, we finish the proof.
\end{proof}

\section{The proof of Theorem \ref{TH1}} {\label{sec:3}}

We now suggest six steps to prove it.

Step 1 is to suggest a class of the even entire function.

Making use of (\ref{eq10}) and (\ref{eq24}), a class of $\mathbb{M}_\aleph \left( \tau
\right)$ is given as follows:
\begin{equation}
\label{eq25}
\sum\limits_{\gamma =0}^\infty {\alpha _{2\gamma } \tau ^{2\gamma }} =\mathbb{
M}_\aleph \left( 0 \right)\prod\limits_{\Im \left( {\sigma _\ell }
\right)>0} {\left( {1-\frac{\tau ^2}{\sigma _\ell ^2 }} \right)} ,
\end{equation}
where
\begin{equation}
\label{eq26}
\alpha _{2\gamma } =\frac{1}{\left( {2\gamma } \right)!}\int\limits_0^\infty
{\exp \left( {-\aleph x^2} \right)G\left( x \right)x^{2\gamma }dx} .
\end{equation}
Since $G\left( x \right)>0$ for $x>0$ \cite{17}, it follows from (\ref{eq26})
that
\begin{equation}
\label{eq27}
\alpha _{2\gamma } >0.
\end{equation}

Step 2 is to set up the first product of $\overline {\mathbb{M}_\aleph
\left( \tau \right)} $.

Obviously, we have
\begin{equation}
\label{eq28}
\mathbb{M}_\aleph \left( \tau \right)=\sum\limits_{\gamma =0}^\infty {\alpha
_{2\gamma } \tau ^{2\gamma }}
\end{equation}
such that
\begin{equation}
\label{eq29}
\overline {\mathbb{ M}_\aleph \left( \tau \right)} =\overline {\left[
{\sum\limits_{\gamma =0}^\infty {\alpha _{2\gamma } \tau ^{2\gamma }} }
\right]} =\sum\limits_{\gamma =0}^\infty {\overline {\alpha _{2\gamma } }
\overline \tau ^{2\gamma }} =\sum\limits_{\gamma =0}^\infty {\alpha
_{2\gamma } \overline \tau ^{2\gamma }} .
\end{equation}
From (\ref{eq29}) we show that
\begin{equation}
\label{eq30}
\overline {\mathbb{M}_\aleph \left( \tau \right)} =\sum\limits_{\gamma
=0}^\infty {\alpha _{2\gamma } \overline \tau ^{2\gamma }} =\mathbb{M}_\aleph
\left( {\overline \tau } \right).
\end{equation}
It follows from (\ref{eq25}) that we have
\begin{equation}
\label{eq31}
\mathbb{M}_\aleph \left( \tau \right)=\mathbb{M}_\aleph \left( 0
\right)\prod\limits_{\Im \left( {\sigma _\ell } \right)>0} {\left(
{1-\frac{\tau ^2}{\sigma _\ell ^2 }} \right)}
\end{equation}
such that (\ref{eq30}) can be written as
\begin{equation}
\label{eq32}
\overline {\mathbb{M}_\aleph \left( \tau \right)} =\mathbb{M}_\aleph \left(
{\overline \tau } \right)=\mathbb{M}_\aleph \left( 0 \right)\prod\limits_{\Im
\left( {\sigma _\ell } \right)>0} {\left( {1-\frac{\overline \tau ^2}{\sigma
_\ell ^2 }} \right)} .
\end{equation}

Step 3 is to set up the second product of $\overline {\mathbb{
M}_\aleph \left( \tau \right)} $.

Once again, from (\ref{eq25}) we have
\begin{equation}
\label{eq33}
\mathbb{M}_\aleph \left( \tau \right)=\mathbb{M}_\aleph \left( 0
\right)\prod\limits_{\Im \left( {\sigma _\ell } \right)>0} {\left(
{1-\frac{\tau ^2}{\sigma _\ell ^2 }} \right)}
\end{equation}
such that
\begin{equation}
\label{eq34}
\overline {\mathbb{M}_\aleph \left( \tau \right)} =\overline {\left[ {\mathbb{
M}_\aleph \left( 0 \right)\prod\limits_{\Im \left( {\sigma _\ell }
\right)>0} {\left( {1-\frac{\tau ^2}{\sigma _\ell ^2 }} \right)} } \right]}
.
\end{equation}
Recall (\ref{eq15}) such that
\begin{equation}
\label{eq35}
\mathbb{M}_\aleph \left( 0 \right)=\alpha _0 =\int\limits_0^\infty {\exp \left(
{-\aleph x^2} \right)G\left( x \right)dx} >0.
\end{equation}
Combining (\ref{eq34}) and (\ref{eq35}) implies that
\begin{equation}
\label{eq36}
\begin{aligned}
 \overline {\mathbb{M}_\aleph \left( \tau \right)} &=\overline {\left[ {\mathbb{
M}_\aleph \left( 0 \right)\prod\limits_{\Im \left( {\sigma _\ell }
\right)>0} {\left( {1-\frac{\tau ^2}{\sigma _\ell ^2 }} \right)} } \right]}
&=\mathbb{M}_\aleph \left( 0 \right)\overline {\left[ {\prod\limits_{\Im \left(
{\sigma _\ell } \right)>0} {\left( {1-\frac{\tau ^2}{\sigma _\ell ^2 }}
\right)} } \right]} \\
&=\mathbb{M}_\aleph \left( 0 \right)\prod\limits_{\Im \left( {\sigma _\ell }
\right)>0} {\overline {\left( {1-\frac{\tau ^2}{\sigma _\ell ^2 }} \right)}
} \\
&=\mathbb{M}_\aleph \left( 0 \right)\prod\limits_{\Im \left( {\sigma _\ell }
\right)>0} {\left[ {1-\overline {\left( {\frac{\tau ^2}{\sigma _\ell ^2 }}
\right)} } \right]} . \\
\end{aligned}
\end{equation}
Hence, we have
\begin{equation}
\label{eq37}
\overline {\left( {\frac{\tau ^2}{\sigma _\ell ^2 }} \right)}
=\frac{\overline \tau ^2}{\overline {\sigma _\ell } ^2}
\end{equation}
such that (\ref{eq36}) can be rewritten as
\begin{equation}
\label{eq38}
\overline {\mathbb{M}_\aleph \left( \tau \right)} =\mathbb{ M}_\aleph \left( 0
\right)\prod\limits_{\Im \left( {\sigma _\ell } \right)>0} {\left(
{1-\frac{\overline \tau ^2}{\overline {\sigma _\ell } ^2}} \right)} .
\end{equation}

Step 4 is to set up the products of $\mathbb{M}_\aleph \left(
\varsigma \right)$.

On combination of (\ref{eq32}) and (\ref{eq38}) implies that
\begin{equation}
\label{eq39}
\overline {\mathbb{M}_\aleph \left( \tau \right)} =\mathbb{M}_\aleph \left( 0
\right)\prod\limits_{\Im \left( {\sigma _\ell } \right)>0} {\left(
{1-\frac{\overline \tau ^2}{\sigma _\ell ^2 }} \right)} =\mathbb{M}_\aleph
\left( 0 \right)\prod\limits_{\Im \left( {\sigma _\ell } \right)>0} {\left(
{1-\frac{\overline \tau ^2}{\overline {\sigma _\ell } ^2}} \right)} .
\end{equation}
Applying (\ref{eq30}), we have \cite{2}
\begin{equation}
\label{eq40}
\overline {\mathbb{M}_\aleph \left( \tau \right)} =\mathbb{M}_\aleph \left(
{\overline \tau } \right)
\end{equation}
such that (\ref{eq39}) becomes
\begin{equation}
\label{eq41}
\overline {\mathbb{M}_\aleph \left( \tau \right)} =\mathbb{M}_\aleph \left(
{\overline \tau } \right)=\mathbb{M}_\aleph \left( 0 \right)\prod\limits_{\Im
\left( {\sigma _\ell } \right)>0} {\left( {1-\frac{\overline \tau ^2}{\sigma
_\ell ^2 }} \right)} =\mathbb{M}_\aleph \left( 0 \right)\prod\limits_{\Im
\left( {\sigma _\ell } \right)>0} {\left( {1-\frac{\overline \tau
^2}{\overline {\sigma _\ell } ^2}} \right)} .
\end{equation}
Taking $\varsigma =\overline \tau \in \mathbb{C}$ into (\ref{eq41}), we present
\begin{equation}
\label{eq42}
\mathbb{M}_\aleph \left( \varsigma \right)=\mathbb{M}_\aleph \left( 0
\right)\prod\limits_{\Im \left( {\sigma _\ell } \right)>0} {\left(
{1-\frac{\varsigma ^2}{\sigma _\ell ^2 }} \right)} =\mathbb{M}_\aleph \left( 0
\right)\prod\limits_{\Im \left( {\sigma _\ell } \right)>0} {\left(
{1-\frac{\varsigma ^2}{\overline {\sigma _\ell } ^2}} \right)} .
\end{equation}

Step 5 is to investigate the convergence of $\mathbb{M}_\aleph \left(
1 \right)$.

Putting $\varsigma =1$ into (\ref{eq42}), we suggest
\begin{equation}
\label{eq43}
\mathbb{M}_\aleph \left( 1 \right)=\mathbb{M}_\aleph \left( 0
\right)\prod\limits_{\Im \left( {\sigma _\ell } \right)>0} {\left(
{1-\frac{1}{\sigma _\ell ^2 }} \right)} =\mathbb{M}_\aleph \left( 0
\right)\prod\limits_{\Im \left( {\sigma _\ell } \right)>0} {\left(
{1-\frac{1}{\overline {\sigma _\ell } ^2}} \right)} .
\end{equation}
From (\ref{eq7}) we may get
\begin{equation}
\label{eq44}
\mathbb{M}_\aleph \left( 1 \right)=\int\limits_0^\infty {\exp \left( {-\aleph
x^2} \right)G\left( x \right)\cosh \left( x \right)dx} <\infty .
\end{equation}
With use of Theorem \ref{TH3}, we have $\varepsilon >0$ such that
$\sum\limits_{\ell =1}^\infty {\left| {\sigma _\ell } \right|^{-\left(
{1+\varepsilon } \right)}} $ is convergent. Taking $\varepsilon =1$ implies
that $\sum\limits_{\ell =1}^\infty {\left| {\sigma _\ell } \right|^{-2}} $
is convergent.

In view of Theorem 5 in Knopp's monograph (see \cite{18}, p.10),
\begin{equation}
\label{eq45}
\mathbb{M}_\aleph \left( 0 \right)\prod\limits_{\Im \left( {\sigma _\ell }
\right)>0} {\left( {1-\frac{1}{\sigma _\ell ^2 }} \right)} =\mathbb{M}_\aleph
\left( 0 \right)\prod\limits_{\Im \left( {\sigma _\ell } \right)>0} {\left(
{1-\frac{1}{\overline {\sigma _\ell } ^2}} \right)}
\end{equation}
is absolutely convergent.

Making use of (\ref{eq45}), Theorem 3 in Knopp's monograph (see \cite{18}, p.10)
implies that $\sum\limits_{\Im \left( {\sigma _\ell } \right)>0}^\infty
{\sigma _\ell ^{-2} } $ and $\sum\limits_{\Im \left( {\sigma _\ell }
\right)>0}^\infty {\overline {\sigma _\ell } ^{-2}} $ are convergent.

Step 6 is to obtain $\Re \left( {\sigma _\ell }
\right)=0$.

Adopting (\ref{eq45}), we suggest
\begin{equation}
\label{eq46}
\sum\limits_{\Im \left( {\sigma _\ell } \right)>0}^\infty {\sigma _\ell
^{-2} } =\sum\limits_{\Im \left( {\sigma _\ell } \right)>0}^\infty
{\overline {\sigma _\ell } ^{-2}} .
\end{equation}
Since $\sum\limits_{\Im \left( {\sigma _\ell } \right)>0}^\infty {\sigma
_\ell ^{-2} } $ and $\sum\limits_{\Im \left( {\sigma _\ell }
\right)>0}^\infty {\overline {\sigma _\ell } ^{-2}} $ are convergent, it
follows from (\ref{eq46}) that
\begin{equation}
\label{eq47}
\sum\limits_{\Im \left( {\sigma _\ell } \right)>0}^\infty {\left( {\sigma
_\ell ^{-2} -\overline {\sigma _\ell } ^{-2}} \right)} =0,
\end{equation}
which yields that
\begin{equation}
\label{eq48}
\sigma _\ell ^{-2} -\overline {\sigma _\ell } ^{-2}=0.
\end{equation}
From (\ref{eq48}) we have
\begin{equation}
\label{eq49}
\sigma _\ell ^2 -\overline {\sigma _\ell } ^2=0
\end{equation}
and $\Im \left( {\sigma _\ell } \right)>0$ such that
\begin{equation}
\label{eq50}
\left( {\sigma _\ell -\overline {\sigma _\ell } } \right)\left( {\sigma
_\ell +\overline {\sigma _\ell } } \right)=4\Im \left( {\sigma _\ell }
\right)\Re \left( {\sigma _\ell } \right)=0.
\end{equation}
With the aid of (\ref{eq50}), we obtain
\begin{equation}
\label{eq51}
\Re \left( {\sigma _\ell } \right)=0.
\end{equation}

Putting $\Im \left( {\sigma _\ell } \right)=\rho _\ell >0$ into (\ref{eq25}) and
using (\ref{eq46}), we may give
\begin{equation}
\label{eq52}
\sum\limits_{\gamma =0}^\infty {\alpha _{2\gamma } \tau ^{2\gamma }} =\mathbb{
M}_\aleph \left( 0 \right)\prod\limits_{\ell =1}^\infty {\left(
{1+\frac{\tau ^2}{\rho _\ell ^2 }} \right)}
\end{equation}
and
\begin{equation}
\label{eq53}
\sum\limits_{\Im \left( {\sigma _\ell } \right)>0}^\infty {\sigma _\ell
^{-2} } =-\sum\limits_{\ell =1}^\infty {\rho _\ell ^{-2} } <\infty .
\end{equation}

This implies that Theorem \ref{TH1} is true.

As a direct consequence of (\ref{eq52}), we have the
following:

\begin{corollary}
\label{CORO1}
There is
\begin{equation}
\label{eq54}
\Xi _\aleph \left( \lambda \right)=\Xi _\aleph \left( 0
\right)\prod\limits_{\ell =1}^\infty {\left( {1-\frac{\lambda ^2}{\rho _\ell
^2 }} \right)} ,
\end{equation}
where the product takes over all positive zeros $\rho _\ell >0$ of $\Xi _\aleph \left(
\lambda \right)$.
\end{corollary}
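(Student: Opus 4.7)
The plan is to deduce the product representation (\ref{eq54}) directly from equation (\ref{eq52}) via the substitution $\tau = i\lambda$, combined with the identity $\Xi_\aleph(\lambda) = \mathbb{M}_\aleph(i\lambda)$ recorded in (\ref{eq9}). Since this identity was already used at the outset to translate the problem from $\Xi_\aleph$ to $\mathbb{M}_\aleph$, applying it in reverse converts the product for $\mathbb{M}_\aleph$ into a product for $\Xi_\aleph$ with essentially no additional analytic work.

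First I would evaluate (\ref{eq9}) at $\lambda = 0$ to note that $\Xi_\aleph(0) = \mathbb{M}_\aleph(0)$, which pins down the leading constant in (\ref{eq54}). Next I would substitute $\tau = i\lambda$ into (\ref{eq52}): each factor $1 + \tau^2/\rho_\ell^2$ turns into $1 - \lambda^2/\rho_\ell^2$, while the left-hand side becomes $\Xi_\aleph(\lambda)$, yielding (\ref{eq54}) immediately.

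The one substantive step is to identify the $\rho_\ell$ appearing on the right of (\ref{eq52}) as precisely the positive real zeros of $\Xi_\aleph$. For this I would invoke Theorem \ref{TH1} together with Step 6 of its proof: every zero $\sigma_\ell$ of $\mathbb{M}_\aleph$ with $\Im(\sigma_\ell) > 0$ has the form $i\rho_\ell$ with $\rho_\ell > 0$, and by evenness of $\mathbb{M}_\aleph$ each $-i\rho_\ell$ is also a zero, so these two families exhaust the zero set. Under the correspondence $\tau = i\lambda$, the points $\pm i\rho_\ell$ pull back to the real numbers $\pm\rho_\ell$, which are therefore the full list of zeros of $\Xi_\aleph$; the positive half of this list indexes the product in (\ref{eq54}).

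There is no genuine obstacle beyond bookkeeping. Absolute convergence of the resulting product is inherited from (\ref{eq52}), whose convergence rests on $\sum \rho_\ell^{-2} < \infty$ as recorded in (\ref{eq53}), and the substitution $\tau \mapsto i\lambda$ preserves absolute convergence on compact subsets of $\mathbb{C}$. Thus Corollary \ref{CORO1} is a direct repackaging of Theorem \ref{TH1} in the original de Bruijn variable $\lambda$ from (\ref{eq1}), requiring no further estimates.
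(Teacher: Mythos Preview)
Your proposal is correct and follows essentially the same route as the paper: both argue by substituting $\tau = i\lambda$ into (\ref{eq52}), using (\ref{eq9}) to convert $\mathbb{M}_\aleph(i\lambda)$ into $\Xi_\aleph(\lambda)$ and noting $\Xi_\aleph(0) = \mathbb{M}_\aleph(0)$, so that each factor $1 + \tau^2/\rho_\ell^2$ becomes $1 - \lambda^2/\rho_\ell^2$. Your additional remarks on identifying the $\rho_\ell$ as the positive real zeros of $\Xi_\aleph$ and on inheriting convergence from (\ref{eq53}) go slightly beyond what the paper spells out, but the underlying argument is the same.
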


\begin{proof}
Combining (\ref{eq7}), (\ref{eq10}) and (\ref{eq52}), we present
\begin{equation}
\label{eq55}
\begin{aligned}
\mathbb{M}_\aleph \left( \tau \right)&=\sum\limits_{\gamma =0}^\infty {\alpha
_{2\gamma } \tau ^{2\gamma }}
=\int\limits_0^\infty {\exp \left( {-\aleph
x^2} \right)G\left( x \right)\cosh \left( {\tau x} \right)dx} \\
&=\mathbb{M}_\aleph \left( 0 \right)\prod\limits_{\ell =1}^\infty {\left(
{1+\frac{\tau ^2}{\rho _\ell ^2 }} \right)} . \\
\end{aligned}
\end{equation}
Substituting (\ref{eq9}) into (\ref{eq55}), we have
\begin{equation}
\label{eq56}
\Xi _\aleph \left( 0 \right)=\mathbb{ M}_\aleph \left( 0 \right)
\end{equation}
such that
\begin{equation}
\label{eq57}
\begin{aligned}
 \Xi _\aleph \left( \lambda \right)&=\mathbb{M}_\aleph \left( {i\lambda }
\right)=\sum\limits_{\gamma =0}^\infty {\alpha _{2\gamma } \left( {i\lambda
} \right)^{2\gamma }}
=\int\limits_0^\infty {\exp \left( {-\aleph x^2} \right)G\left( x
\right)\cosh \left( {i\lambda x} \right)dx} \\
&=\mathbb{M}_\aleph \left( 0 \right)\prod\limits_{\ell =1}^\infty {\left[
{1+\frac{\left( {i\lambda } \right)^2}{\rho _\ell ^2 }} \right]}
=\mathbb{M}_\aleph \left( 0 \right)\prod\limits_{\ell =1}^\infty {\left(
{1-\frac{\lambda ^2}{\rho _\ell ^2 }} \right)} \\
&=\Xi _\aleph \left( 0 \right)\prod\limits_{\ell =1}^\infty {\left(
{1-\frac{\lambda ^2}{\rho _\ell ^2 }} \right)} . \\
\end{aligned}
\end{equation}
We therefore complete the proof.
\end{proof}

\begin{remark}
Obviously, (\ref{eq54}) implies that all zeros of (\ref{eq1}) are purely real and that
Corollary \ref{CORO1} is the truth of the conjecture of Newman. Readers also follow the
technology reported in \cite{19} to prove it.
The special
case for $\aleph =0$ was proved in \cite{20}. We also allow that
Corollary \ref{CORO1} and Theorem \ref{TH1} are valid for $\aleph =0$.
Compared with the method of Rodgers and Tao \cite{2},
our work agrees with the pair correlation estimates of
Montgomery \cite{21} because the product and series representations of
$ \Xi _\aleph \left( \lambda \right)$ in Corollary \ref{CORO1} are
analogous of the Riemann zeta function.
\end{remark}

\section{Conclusion} {\label{sec:4}}

In the present task we have established a class of the even entire function
of order $\beta =1$ with its series and product formulae. Based on the
obtained results, we have obtained the truth of the conjecture of Newman by the
functional equation. Compared with the method of Rodgers and Tao,
our result is consistent with the well-known pair correlation estimates of
Montgomery.
This may be proposed as a new approach to deal with a
family of the even entire functions of order one.

\textbf{Conflict of Interest: }
The author has no conflicts of interest to declare.

\end{document}